\newcommand{\R}{\mathds{R}}
\newcommand{\C}{\mathds{C}}
\newcommand{\Z}{\mathds{Z}}
\newcommand{\Ric}{\mathop{\mathrm{Ric}}\nolimits}
\renewcommand{\section}{\@startsection%
{section}
{1}
{0mm}
{1.5\bigskipamount}
{0.5\bigskipamount}
{\centering\normalsize\sc}}
\renewcommand{\paragraph}{\@startsection%
{paragraph}
{4}
{0mm}
{\bigskipamount}
{-1.25ex}
{\normalsize\sl}}
\def\provedboxcontents#1{$\square$}
\newtheoremstyle{thm}{}{}{\slshape}{}{\scshape}{.}{0.5em}{}
\newtheoremstyle{def}{}{}{}{}{\scshape}{.}{0.5em}{}
\newtheoremstyle{rmk}{}{}{}{}{\scshape}{.}{0.5em}{}
\newtheoremstyle{claim}{}{}{}{}{\slshape}{.}{0.5em}{}
\theoremstyle{thm}
\newtheorem{newstatement}{newstatement}
\newtheorem{lemma}{Lemma}
\newtheorem{theorem}[newstatement]{Theorem}
\newtheorem{corollary}{Corollary}
\newtheorem*{conjecture*}{Conjecture}
\theoremstyle{def}
\newcommand{\K}{K\"{a}hler}
\theoremstyle{rmk}
\newtheorem{remark}{Remark}
\theoremstyle{claim}
\let\expandafter\oldproof\csname\string\proof\endcsname
\let\oldendproof\endproof
\renewenvironment{proof}[1][\proofname]{%
  \oldproof[\slshape #1]%
}{\oldendproof}
\let\geq\geqslant
\let\leq\leqslant
\let\epsilon\varepsilon
\renewcommand{\emph}[1]{{\slshape #1}}
\renewcommand{\em}{\sl}
\title{Ricci flat Calabi's metric is not projectively induced}
\author{Andrea Loi}
\address[Andrea Loi, Fabio Zuddas]{Dipartimento di Matematica e Informatica, Universit\`{a} di Cagliari,
Via Ospedale 72, 09124 Cagliari, Italy}
\email{loi@unica.it\\ fabio.zuddas@unica.it}
\author{Michela Zedda}
\address[Michela Zedda]{Dipartimento di Scienze Matematiche, Fisiche e Naturali, Parco Area delle Scienze 53/A  Parma (Italy)}
\email{michela.zedda@unipr.it}
\author{Fabio Zuddas}
\thanks{
The first and the third authors were supported  by Prin 2015 -- Real and Complex Manifolds; Geometry, Topology and Harmonic Analysis -- Italy,  by GESTA - Funded by Fondazione di Sardegna and Regione Autonoma della Sardegna and by KASBA- Funded by Regione Autonoma della Sardegna.\\
Finally, all the three authors were supported by INdAM GNSAGA - Gruppo Nazionale per le Strutture Algebriche, Geometriche e le loro Applicazioni.
}
\date{}
\subjclass[2010]{Primary 53C55; Secondary 58C25; 58F06.}
\keywords{Calabi's diastasis function, Ricci flat metric, projectively induced metric, flag manifold}
\begin{document}
\maketitle
\begin{abstract}
We show that the Ricci flat Calabi's metrics on holomorphic line bundles over compact \K\--Einstein manifolds are not projectively induced. As a byproduct we solve a conjecture addressed in \cite{LSZricci} by proving that any multiple of  the Eguchi-Hanson metric on the blow-up of $\C^2$ at the origin is not projectively induced.
\end{abstract}
\section{Introduction}
It is still an open problem to classify those complex manifolds which admit a Ricci flat and projectively induced K\"ahler metric. Here a K\"ahler metric $g$ on a complex manifold $M$ (not necesseraly compact) is said to be \emph{projectively induced} if there exists a K\"ahler (isometric and holomorphic) immersion of $(M, g)$ into the complex projective space $(\mathds{C}{\rm P}^N, g_{FS})$, $N \leq +\infty$,  endowed with the Fubini--Study metric $g_{FS}$, namely the metric whose associated \K\ form is given in homogeneous coordinates  by  $\omega_{FS}=\frac{i}{2\pi}\partial\bar\partial\log (|Z_0|^2+\cdots +|Z_N|^2)$.
This problem, which represents a special case of the classical and well-studied one dealing with \K\--Einstein and projectively induced metrics, has been addressed in \cite{LSZricci} where the authors proposed the following:
\vskip 0.3cm
\noindent
Conjecture:
{\em A Ricci-flat projectively induced metric is flat.}
\vskip 0.3cm
Roughly speaking the conjecture seems plausible since  Ricci flat metrics are solutions of the Monge-Ampere equation and one should  expect that solutions to such a nonlinear PDE would be \lq\lq algebraic'' (namely projectively induced) only  in very special cases.
In \cite{LSZricci}  the conjecture is proved for radial metrics by showing that the Eguchi-Hanson metric $g_{EH}$
on the blow-up of $\C^2$ at the origin is not projectively induced (cfr. \cite[Cor. 3.3]{LSZricci}, see also \cite{MZricciflat} for not--radial cases).  It is worth pointing out that  
Ricci-flatness, even in the radial case, cannot be weakened to scalar-flatness, as shown 
by the Simanca metric (cfr. \cite[Th. 1.3]{LSZricci}, see also \cite{CAL}).

Of course a necessary condition for a \K\ metric $g$ on a complex manifold $M$
to be projectively induced is that its associated \K\ form is integral, i.e.   
$[\omega]\in H^2(M, \Z)$, where  $[\omega]$ denotes the de-Rham class of $\omega$.
Nevertheless, even if one is able to verify that  a given Ricci flat metric $g$ with associated integral \K\ form $\omega$  is not projectively induced it is not an easy task to understand if the same is true  for a (Ricci flat) metric homothetic to $g$, namely $\lambda g$.
Indeed  in the noncompact case, due for example to the fact that $\lambda \omega$ is always integral provided $M$ is contractible, the structure of the set of the positive real numbers $\lambda \in \R^+$ for which $\lambda g$ is projectively induced is in general less trivial than in the compact case (where it is always discrete). For example if  $\Omega$ is  an irreducible bounded symmetric domain endowed with its Bergman metric $g_B$ one can prove (see \cite{LZ}) that   
$(\Omega, \lambda g_B)$ admits a  \K\ immersion into $\C P^{\infty}$ if
and only if $\lambda$
 belongs to the so called  Wallach set of $\Omega$.
 Another interesting example is given by the Cigar metric $g=\frac{dz\otimes d\bar z}{1+|z|^2}$
 on $\C$ which is not projectively induced together with all its multiples $\lambda g$ (cfr. \cite{LZcigar}).
 The proof of this result is quite involved and it is based on some properties of Bell polynomials (see also Section \ref{bell} below).

The aim of this paper is to verify the validity  of the  above conjecture 
for the  Ricci flat metrics on holomorphic line bundles over compact \K\--Einstein manifolds constructed by Calabi in \cite{calabi79}. In order to state our main result (Theorem \ref{mainteor}) we briefly recall Calabi's construction.

Let $(M, g)$ be a compact  \K\--Einstein manifold  of  complex dimension $n-1$ and with associated \K\ form $\omega_g$.
Let $k_0>0$ be the Einstein constant of $g$, namely $\rho_g = k_0 \omega_g$, being $\rho_g = -\frac{i}{\pi} \partial \bar \partial \log \det(g)$ the Ricci form.
Let  $\pi: \Lambda^{n-1} M \rightarrow M$ be the canonical line bundle over $M$, with hermitian metric $h$
given, for a system of local coordinates $z = (z_1, \dots, z_{n-1})$ on an open set $U\subset M$, by:
$$h(\xi):=h(\xi dz_1 \wedge \cdots \wedge d z_{n-1}, \xi dz_1 \wedge \cdots \wedge d z_{n-1}) = \det(g)^{-1} |\xi|^2.$$
Since the metric $g$ is assumed to be \K\--Einstein one has:
 \begin{equation}\label{eqq}
\frac{i}{\pi}\partial \bar \partial \log h(\xi) = k_0 \omega_g.
 \end{equation}
 Let $u:[0, +\infty)\rightarrow\R$ be the smooth function defined by:
 \begin{equation}\label{funcal}
 u(x)=\frac{n}{k_0}\left[\left(1+cx\right)^{\frac{1}{n}}-1\right]-\sum_{j=1}^{n-1}\frac{1-\tau^j}{k_0}
 \log \left[\frac{\left(1+cx\right)^{\frac{1}{n}}-\tau^j}{1-\tau^j}\right],
 \end{equation}
 where $c>0$ is constant and   $\tau=e^{\frac {2\pi i}{n}}$.

One easily verifies that 
the function $u$  satisfies:
\begin{enumerate}
\item[$(i)$] $1 + k_0 x u'(x) > 0, \ \ u'(x) + x u''(x) > 0$,
\item[$(ii)$]
$(1 + k_0 x u'(x))^{n-1} (u'(x) + x u''(x)) = c.$
\end{enumerate}

We have the following beautiful  result due to Calabi.

\vskip 0.3cm

\noindent
{\bf Theorem C} (Calabi \cite{calabi79})
{\em Let $(M, g)$ be a compact \K--Einstein manifold of complex dimension $n-1$ and positive Einstein constant $k_0$ and $u$ the function given by \eqref{funcal}.
If  $\Phi$ is a \K\ potential for $g$, i.e. $\omega_g =\frac{i}{2\pi} \partial \bar \partial \Phi$ on  $U$, then
the function $\Psi:\pi^{-1}(U)\rightarrow \R$
defined by
\begin{equation}\label{potentialcanonical}
\Psi = \Phi \circ \pi + u\left(\det(g)^{-1} |\xi|^2\right)
\end{equation}
is a \K\  potential on $\pi^{-1}(U)$ for  a Ricci flat  and complete metric $g_C$  on the total space $\Lambda^{n-1} M$.}
\begin{remark}\label{remarmult}
Notice that if $g_C$ is the Ricci flat metric corresponding to $(M, g)$ as in the previous theorem, one easily deduces that  $\alpha g_C$, $\alpha >0$, is  the metric corresponding to 
$(M, \alpha g)$. 
\end{remark}
The previous theorem represents a special case of a more general construction due to Calabi  himself (see \cite{calabi79} for more details).
More precisely, Calabi considers a holomorphic hermitian line bundle $(L, h)\rightarrow M$    over a compact  \K-Einstein manifold $(M, g)$, with Einstein constant $k_0$ (of arbitrary sign) such  that
\begin{equation}\label{eqqq}
\Ric (h)=-\ell \omega,
\end{equation}
for some real number $\ell$,
where
 $\Ric (h)$ is the two-forms on $L$ given by:
  \begin{equation}\label{ricci}
\Ric (h)=-\frac{i}{\pi}
\partial\bar\partial\log h(\sigma(x), \sigma (x)),
\end{equation} 
for a trivializing holomorphic section
$\sigma$ of $L$.

 Then, Calabi shows that there exists a \K-Einstein metric $\tilde g$ with Einstein constant $k_0-
 \ell$ on  an open subset $V$ of $L$, Moreover, when $\tilde g$ is Ricci flat, i.e. $\ell=k_0$, then $\tilde g$ is defined on the whole $L$, i.e.  $V=L$.
Notice also that   $\tilde g$ is constructed in such a way that the natural  inclusion  $M\hookrightarrow L$ is  a \K\ immersion, 
i.e. $\tilde g _{|M}=g$. 

We can now state our main result.
 \begin{theorem}\label{mainteor}
Let $(M, g)$ be a compact K\"ahler--Einstein manifold with Einstein constant $k_0$. 
The Calabi's metric $\tilde g$ on $L$ cannot be simultaneously  Ricci flat and projectively induced. \end{theorem}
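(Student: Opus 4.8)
The plan is to reduce the statement to a one-dimensional radial metric on a fiber of $L$ and then to rule out projective inducibility by controlling the singularities of the exponential of the diastasis. Throughout I use the elementary fact that \K\ immersions compose: if $(L,\tilde g)$ admits a \K\ immersion into $\C P^\infty$, then the induced metric on any complex submanifold of $L$ does as well. First I would fix $p_0\in M$ and consider the fiber $F=\pi^{-1}(p_0)\cong\C$, a closed complex submanifold of $L$ with fiber coordinate $w$. Since pullback commutes with $\partial$ and $\bar\partial$, the potential of \eqref{potentialcanonical} restricts on $F$ to $\Psi|_F=\Phi(p_0)+u\!\left(a|w|^2\right)$, where $a=\det(g)^{-1}(p_0)>0$ is a constant; dropping the additive constant, the induced metric on $F$ is the radial metric on $\C$ with \K\ potential $u(a|w|^2)$. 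Thus it suffices to prove that this radial metric is not projectively induced.

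Next I would compute the diastasis and invoke Calabi's criterion. For a radial potential $\phi(|w|^2)$ centered at the origin one has $D_0(w)=\phi(|w|^2)-\phi(0)$ for the diastasis, and since $u(0)=0$ (immediate from \eqref{funcal}) the diastasis of the fiber metric centered on the zero section is $D_0=u(a|w|^2)$. By Calabi's criterion a real--analytic \K\ metric is projectively induced into $\C P^\infty$ if and only if the coefficient matrix of $e^{D_0}-1$ is positive semidefinite (see \cite{LSZricci}); in the radial case this matrix is diagonal, so the criterion reduces to the requirement that all Taylor coefficients $c_k$ ($k\ge 1$) of
$$e^{u(as)}=\sum_{k\ge 0}c_k\,s^k,\qquad s=|w|^2,$$
be nonnegative (note $c_0=1$). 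Hence it is enough to exhibit a single negative coefficient $c_k$.

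Here Ricci--flatness enters decisively. A direct computation from \eqref{funcal} (equivalently, integrating the separable equation obtained from condition $(ii)$ for $v=xu'$, with $v(0)=0$) yields $1+k_0\,x\,u'(x)=(1+cx)^{1/n}$, that is $u'(x)=\dfrac{(1+cx)^{1/n}-1}{k_0\,x}$, which is analytic at $x=0$ and extends holomorphically to the slit plane $\C\setminus(-\infty,-1/c]$ with a genuine branch point at $x=-1/c$ (recall $n\ge 2$ and $c>0$). Consequently $e^{u(as)}$, viewed as a function of $s$, is holomorphic on $\C\setminus(-\infty,-1/(ca)]$, its only finite singularity being a branch point at $s=-1/(ca)$; in particular its Taylor series at $s=0$ has radius of convergence exactly $1/(ca)$, while $e^{u(as)}$ is regular at the positive real point $s=1/(ca)$.

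Finally I would conclude by a singularity argument: suppose, for contradiction, that all $c_k\ge 0$. Since $c_0=1>0$ and the radius of convergence $1/(ca)$ is finite, the Vivanti--Pringsheim theorem forces the boundary point $s=1/(ca)$ on the positive real axis to be a singularity of $e^{u(as)}$, contradicting its regularity there. Therefore some $c_k<0$, the fiber metric is not projectively induced, and neither is $\tilde g$; by Remark \ref{remarmult} this also disposes of every multiple $\alpha g_C$, as rescaling $g$ merely changes $k_0$, and the Eguchi--Hanson case is recovered for $n=2$. The step I expect to be the main obstacle is the rigorous control of this singularity structure: one must verify that the branch point at $x=-1/c$ is not accidentally cancelled by the integration defining $u$ nor by exponentiation, and that the radius of convergence is exactly $1/(ca)$ and not smaller. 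An alternative, entirely self--contained route—paralleling the Bell--polynomial analysis of the Cigar metric in \cite{LZcigar} (see Section \ref{bell})—expresses $c_k$ through the Bell polynomials of the Taylor coefficients of $u$, whose signs alternate because they are proportional to $\binom{1/n}{m}$, which has sign $(-1)^{m-1}$, and tracks the sign of $c_k$ directly; this is more computational but avoids appealing to Pringsheim's theorem.
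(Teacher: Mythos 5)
Your proposal takes a genuinely different route from the paper. The paper never leaves the algebraic side of Calabi's criterion: it expands the diastasis $D_{g_C}$ of the total space, observes that the coefficient matrix of $e^{D_{g_C}}-1$ splits into blocks $h_r(u)(c^r_{jk})$ (Lemma \ref{senzanome}), derives necessary conditions (integrality of $k_0/2$, projective inducedness of $(r\frac{k_0}{2}+1)g$, nonnegativity of every $h_r(u)$, Lemma \ref{neccond}), and then computes $h_2(u)<0$ except for $k_0=2$, $n=2$, where $h_4(u)=-\frac23 c^4<0$. You instead restrict the immersion to a single fiber, reducing everything to a radial metric on $\C$ with potential $u(a|w|^2)$, and rule out nonnegativity of the Taylor coefficients of $e^{u(as)}$ by a Vivanti--Pringsheim argument. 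Your reduction is legitimate (K\"ahler immersions do restrict to complex submanifolds, the potential does restrict to the fiber, and for radial metrics Calabi's matrix is diagonal), your closed form $1+k_0xu'(x)=(1+cx)^{1/n}$ is correct (it follows by integrating $(ii)$ with $v=xu'$, $v(0)=0$), and your route, once completed, avoids the integrality condition (a), the case analysis on $(k_0,n)$, and works verbatim for every positive multiple. What it gives up is exactly what the paper's computational route produces as a byproduct: the Bell-polynomial inequality of Theorem \ref{teornum}.

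Two gaps must be addressed. First, the step you flag as ``the main obstacle'' is not a technicality but the crux: if the branch point of $e^{u(as)}$ at $s=-1/(ca)$ were cancelled, the radius of convergence would be infinite and Pringsheim would give no contradiction (nonnegative coefficients with infinite radius are perfectly consistent, e.g. $e^s$). Fortunately it can be closed cleanly: from $u'=\frac{(1+cx)^{1/n}-1}{k_0x}$ one gets, as $x\to(-1/c)^+$ along the reals, that $u$ and $u'$ stay bounded while $u''=\frac{y'}{k_0x}-\frac{y-1}{k_0x^2}$ diverges (since $y'=\frac{c}{n}y^{1-n}\to\infty$, $n\ge 2$); hence $(e^u)''=(u''+(u')^2)e^u$ diverges, while $e^u$ stays bounded away from $0$. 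By the identity theorem, any analytic extension of $e^{u(as)}$ past $s=-1/(ca)$ would force $(e^{u(as)})''$ to have a finite limit there, a contradiction; so the radius is exactly $1/(ca)$ and your argument closes. (Do not fall back on your proposed ``alternative, self-contained'' Bell-polynomial sign-tracking: that is essentially Theorem \ref{teornum}, which the authors explicitly state they were unable to prove directly.)

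Second, a scope gap you do not flag: the theorem concerns the general Calabi metric $\tilde g$ on an arbitrary line bundle $L$ with $\Ric(h)=-\ell\omega$, whereas you silently work with the potential \eqref{potentialcanonical}, i.e.\ with $g_C$ on $\Lambda^{n-1}M$ and with $k_0>0$ built into \eqref{funcal}. The paper's first step supplies precisely this reduction: since $\tilde g|_M=g$, the metric $g$ is projectively induced, so by Hulin's theorem $k_0>0$; then $M$ is simply connected and Ricci flatness ($\ell=k_0$) gives $c_1(L)=c_1(\Lambda^{n-1}M)$, whence $L\cong\Lambda^{n-1}M$ and $\tilde g=g_C$. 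You need to include this step (or an equivalent verification that Calabi's Ricci-flat ansatz on a general $L$ has the same fiber restriction and that $k_0>0$) before your fiber argument applies to the statement as written.
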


When the manifold $(M, g)$  is assumed to be   homogeneous, i.e. a flag manifold,  Theorem \ref{mainteor} gives  a large family of 
Ricci flat metrics  which are not projectively induced (the reader is referred to \cite{LMZ} for an explicit description of the \K\ metrics and their potentials on flag manifolds). 
In particular,  when   $(M, g)=(\C P^1, g_{FS})$ 
then the Calabi's Ricci flat metric  $g_C$ on $O(-2)=\Lambda ^1\C P^1$ is  the celebrated  Eguchi-Hanson metric $g_{EH}$ on the blow-up of $\C^2$ at the origin (see \cite{calabi79}, \cite{LSZricci} and references therein).
 Then, by   Remark \ref{remarmult} one obtains the following corollary of Theorem \ref{mainteor}, which thereby solves a question raised in \cite{LSZricci}.

\begin{corollary} 
The metric  $mg_{EH}$ is not projectively induced for any positive integer $m$.
\end{corollary}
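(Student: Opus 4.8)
The plan is to reduce the statement to a single application of Theorem \ref{mainteor}, the point being that $m g_{EH}$ is itself a Ricci flat Calabi metric rather than merely a rescaling of one lying outside the Calabi family. Recall that $g_{EH} = g_C$ is the Calabi Ricci flat metric associated to the base $(\C P^1, g_{FS})$. By Remark \ref{remarmult} the homothety $m g_C$ is precisely the Calabi metric associated to the rescaled base $(\C P^1, m g_{FS})$. Thus it suffices to check that $(\C P^1, m g_{FS})$ falls within the hypotheses of Theorem \ref{mainteor}, and then to read off the conclusion.

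First I would verify that $m g_{FS}$ is again a \K--Einstein metric on $\C P^1$ with positive Einstein constant. Since the Ricci form $\rho_g = -\frac{i}{\pi}\partial\bar\partial\log\det(g)$ is invariant under $g \mapsto m g$ (the rescaling contributes only an additive constant to $\log\det(g)$, which is annihilated by $\partial\bar\partial$), while $\omega_{m g} = m\,\omega_g$, the Einstein equation $\rho_g = k_0 \omega_g$ transforms into $\rho_{m g} = k_0 \omega_g = \frac{k_0}{m}\omega_{m g}$. Hence $(\C P^1, m g_{FS})$ is a compact \K--Einstein manifold with Einstein constant $k_0/m > 0$, and in particular it is an admissible base for Calabi's construction on $\Lambda^1 \C P^1 = O(-2)$.

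Applying Theorem \ref{mainteor} to $(M, g) = (\C P^1, m g_{FS})$ then shows that the associated Calabi metric---which by the previous step equals $m g_C = m g_{EH}$---cannot be simultaneously Ricci flat and projectively induced. Since $m g_{EH}$ is Ricci flat by Calabi's construction, it cannot be projectively induced; and this holds in fact for every real $m > 0$, a fortiori for every positive integer $m$.

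I expect essentially no obstacle at this level, since all the analytic difficulty has been absorbed into Theorem \ref{mainteor}. The only genuine subtlety is the conceptual one of recognizing, through Remark \ref{remarmult}, that scaling the Eguchi--Hanson metric does not take us outside the Calabi family: precisely the homothety invariance that was the stumbling block in \cite{LSZricci} is here rendered transparent by varying the base metric instead of the total space.
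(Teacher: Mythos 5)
Your proposal is correct and follows exactly the paper's argument: the paper also derives the corollary by combining Remark \ref{remarmult} (so that $mg_{EH}$ is the Calabi metric associated to $(\C P^1, mg_{FS})$) with Theorem \ref{mainteor} applied to that rescaled base. The only difference is that you spell out the details (the scale-invariance of the Ricci form and the Einstein constant $k_0/m$) that the paper leaves implicit in its one-line deduction.
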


The proof of  Theorem \ref{mainteor} is given in Section \ref{proofmainteor}.
Roughly speaking we first show that if  the metric $\tilde g$ on $L$ 
is projectively induced then  $\tilde g=g_C$ (and $L=\Lambda^{n-1}M$).
Then we use  the structure of  Calabi's diastasis function $D_{g_C}$ for the metric $g_C$
and its link with the diastasis of the metric $g$ (to whom  Section \ref{criterion} is dedicated)
to show that $g_C$ is not projectively induced. Finally, in Section \ref{numerical} we prove an inequality on Bell polynomials (Theorem \ref{teornum}) based on the fact that the metric $mg_C$ is not projectively induced.  We were not able  to  find a direct proof 
of Theorem \ref{teornum}.  Nevertheless, we believe that the interplay between Bell polynomials and projectively induced metrics should deserve further investigation.

\section{Calabi's diastasis function for the metric $g_C$}\label{criterion}

Let $(X, G)$ be an $n$-dimensional real analytic K\"ahler manifold and denote by $\Omega$ the K\"ahler form associated to $G$. Set local coordinates $z=(z_1,\dots, z_n)$ on a coordinate chart $U\subset X$ and denote by $\varphi\!:U\rightarrow \R$ a K\"ahler potential for $G$ on $U$, i.e. $\Omega|_U=\frac i2\partial \bar\partial \varphi$. 
Calabi's  diastasis function  \cite{calabi} for $G$ on $U$ is given by:
\begin{equation}
D_G(z,z')=\tilde\varphi(z,\bar z)+\tilde\varphi(z',\bar z')-\tilde\varphi(z,\bar z')-\tilde\varphi(z',\bar z),\nonumber
\end{equation}
where $\tilde\varphi$ is the analytic continuation of $\varphi$ on a neighborhood of the diagonal of $U\times U$. It is easy to see that the diastasis is symmetric in $z$, $z'$ and that, once fixed one of its two entries, it is a K\"ahler potential for $G$. In particular, we will denote by $D_G(z):=D_G(0,z)$ the diastasis centered at the origin of the coordinate system. 
Among the other K\"ahler potentials, the diastasis function is characterized by the fact that in every coordinate system $(z)$ centered in $p$, the $\infty\times\infty$ matrix of coefficients $(a_{jk})$ in its power expansion in terms of $z$ and $\bar z$ around the origin:
\begin{equation}\label{powexdiastc}
D_G(z)=\sum_{j,k=0}^{\infty}a_{jk}z^{m_j}\bar z^{m_k},
\end{equation}
satisfies $a_{j0}=a_{0j}=0$ for every nonnegative integer $j$,
i.e it does not contain either holomorphic or antiholomorphic terms.
 In the multi-index notation
the $m_j$'s are $n$-tuples of integers arranged in lexicographic order.  

In order to prove Theorem \ref{mainteor} we recall Calabi's  criterion 
for a \K\ metric to admit a local  {\em K\"ahler} 
immersion into a finite or infinite dimensional complex projective space $(\mathds{C}{\rm P}^N, g_{FS})$, $N\leq\infty$, through the diastasis function (the reader is referred  to \cite{diastbook} for a more detailed and updated exposition of the subject). 
 
 \begin{lemma}(Calabi's criterion)\label{criterium}
Let $(X, G)$ be a real analytic \K\ manifold and let $D_G$ be its diastasis function around the origin. Then $(X, G)$ admits a local K\"ahler immersion into $(\C P^N, g_{FS})$ if and only if the $\infty\times\infty$ matrix of coefficients $(b_{jk})$ in the power expansion with respect to $z$ and $\bar z$:
\begin{equation}\label{powexdiastcp}
e^{D_G(z)}-1=\sum_{j,k=0}^{\infty}b_{jk}z^{m_j}\bar z^{m_k},
\end{equation}
is positive semidefinite.
\end{lemma}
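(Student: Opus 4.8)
The plan is to exploit the two defining features of Calabi's diastasis recalled above: its intrinsic, potential-independent character, and, crucially, its \emph{hereditary} behaviour under \K\ immersions, combined with the explicit form of the diastasis of the model space $(\C P^N, g_{FS})$. In the normalization adapted to the statement, the Fubini--Study diastasis centered at a point equals, in affine coordinates $w$,
$$D_{FS}(w)=\log\Bigl(1+\sum_{\alpha=1}^N|w_\alpha|^2\Bigr),\qquad\text{so that}\qquad e^{D_{FS}(w)}-1=\sum_{\alpha=1}^N|w_\alpha|^2.$$
The hereditary property asserts that if $\iota\colon(X,G)\to(\C P^N,g_{FS})$ is a local \K\ immersion with $\iota(0)=0$, then $D_G(z)=D_{FS}(\iota(z))$; this is the engine of both implications.

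For the forward direction, I would assume such an immersion exists and write it in affine coordinates as $\iota(z)=(f_1(z),\dots,f_N(z))$ with each $f_\alpha$ holomorphic and $f_\alpha(0)=0$. The hereditary property then gives $e^{D_G(z)}-1=\sum_\alpha|f_\alpha(z)|^2$. Expanding $f_\alpha(z)=\sum_j c_{\alpha j}\,z^{m_j}$ and comparing with \eqref{powexdiastcp} yields $b_{jk}=\sum_\alpha c_{\alpha j}\,\overline{c_{\alpha k}}$. For any finitely supported vector $(\xi_j)$ one then has $\sum_{j,k}b_{jk}\,\xi_j\overline{\xi_k}=\sum_\alpha\bigl|\sum_j c_{\alpha j}\xi_j\bigr|^2\ge 0$, so $(b_{jk})$ is positive semidefinite.

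For the reverse direction, I would assume $(b_{jk})$ is positive semidefinite and factor it as a Gram matrix: there exist coefficients $c_{\alpha j}$, $\alpha=1,\dots,N$ with $N\le\infty$, such that $b_{jk}=\sum_\alpha c_{\alpha j}\,\overline{c_{\alpha k}}$ (for a finite positive semidefinite matrix this is the spectral or Cholesky factorization; in the infinite case one realizes the rows as vectors in $\ell^2$). Setting $f_\alpha(z)=\sum_j c_{\alpha j}\,z^{m_j}$, the expansion \eqref{powexdiastcp} gives $\sum_\alpha|f_\alpha(z)|^2=e^{D_G(z)}-1$, hence $D_G(z)=\log\bigl(1+\sum_\alpha|f_\alpha(z)|^2\bigr)=D_{FS}(\iota(z))$, where $\iota(z)=(f_1(z),\dots,f_N(z))$. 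Since the diastasis is itself a \K\ potential, the pullback $\iota^\ast g_{FS}$ has the same potential $D_G$ as $G$, whence $\iota^\ast g_{FS}=G$; positive definiteness of $G$ forces the differential of $\iota$ to be injective, so $\iota$ is the desired local \K\ immersion.

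The main obstacle is not the algebraic bookkeeping but securing the analytic points that make the $\infty\times\infty$ picture legitimate. First, one must establish the hereditary property in full, which rests on Calabi's analysis identifying the diastasis as the distinguished symmetric potential and showing it transforms correctly under holomorphic maps. Second, one must give a precise meaning to positive semidefiniteness of an infinite Hermitian matrix (every finite principal truncation is positive semidefinite) and produce a Gram factorization whose coefficients $c_{\alpha j}$ give series $f_\alpha(z)$ converging on a common neighborhood of the origin, so that $\iota$ is a genuine, possibly $\ell^2$-valued, holomorphic map. These convergence and factorization issues — carried out in Calabi's original work, which I would invoke — are exactly what separate a formal manipulation of coefficients from an actual \K\ immersion.
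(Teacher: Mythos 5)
The paper itself does not prove this lemma: it is recalled as Calabi's classical criterion, with the proof deferred to \cite{calabi} and \cite{diastbook}. Your argument is exactly the standard proof found there — the hereditary property of the diastasis together with the explicit Fubini--Study diastasis gives the Gram-matrix form $b_{jk}=\sum_\alpha c_{\alpha j}\overline{c_{\alpha k}}$ in the forward direction, and the Gram factorization of the positive semidefinite matrix (with Calabi's convergence analysis for the resulting series and the potential-to-metric step $\iota^\ast g_{FS}=G$) produces the immersion in the converse — so your proposal is correct and takes essentially the same route as the cited source.
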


In the following lemma we describe Calabi's diastasis function for the metric $g_C$
on $\Lambda^{n-1}M$ given in Theorem C.
In order to do that 
we need to introduce the concept of
Bochner's coordinates (cfr. \cite{boc}, \cite{calabi}).
Given a real analytic
\K\ metric $G$ on $X$
and a point $p\in X$,
one can always find local 
(complex) coordinates
in a neighborhood of 
$p$
such that
$$D_G(z)=|z|^2+
\sum _{|j|, |k|\geq 2}
a_{jk}z^j\bar z^k.$$
These coordinates,
uniquely defined up
to a unitary transformation,
are called 
{\em the Bochner's coordinates}
with respect to the point $p$.

\begin{lemma}\label{senzanome}
Let $(M, g)$ be a compact  \K-Einstein manifold with positive Einsten constant $k_0$
and $D=D_g:U\rightarrow \R$ be its diastasis function centered at the origin.
Then the diastasis function for the metric   $g_C$ in Bochner's coordinates reads as:
\begin{equation}\label{potentialcanonicaldiast}
 D_{g_C}=D+ u(e^{\frac{k_0}{2} D} |\xi|^2).
\end{equation}
Moreover, the matrix of coefficients $(b_{il})$ given on \eqref{powexdiastcp} associated to $D_{g_C}$ is a block matrix where each block $(b_{jk}^r)$, $r=0,1,2,\dots$, is given by 
\begin{equation}\label{eqfond}
(b_{jk}^r)=h_r(u)(c^r_{jk}),
\end{equation} 
where $(c^r_{jk})$ is the matrix of coefficients \eqref{powexdiastcp} associated to $(r\frac{k_0}{2}+1)D$ and $h_r(u)$ is a constant that depends on the derivatives of $u(x)$ evaluated at $x=0$ up to the $r$-th order.
In particular:
\begin{equation}\label{h1}
h_1(u)=u'(0);
\end{equation}
\begin{equation}\label{h2}
h_2(u)= \frac12 \left(u'(0)^2 + u''(0)\right);
\end{equation}
\begin{equation}\label{h3}
h_3(u)=\frac1{6}\left(u'(0)^3 +3u''(0)u'(0)+u'''(0)\right);
\end{equation}
\begin{equation}\label{h4}
h_4(u)=\frac1{24} \left(u'(0)^4 +6u''(0)u'(0)^2+4u'''(0)u'(0)+3u''(0)^2+u^{(iv)}(0)\right).
\end{equation}

\end{lemma}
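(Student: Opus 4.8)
The plan is to establish the two assertions in turn: the closed form \eqref{potentialcanonicaldiast} for $D_{g_C}$, and then the block factorisation \eqref{eqfond}.

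For \eqref{potentialcanonicaldiast} I would begin from Calabi's potential \eqref{potentialcanonical} and use the \K--Einstein condition to trade the factor $\det(g)^{-1}$ for $e^{\frac{k_0}{2}D}$. Since $\rho_g=k_0\omega_g$, with $\rho_g=-\frac{i}{\pi}\partial\bar\partial\log\det(g)$ and $\omega_g=\frac{i}{2\pi}\partial\bar\partial D$, the function $-\log\det(g)-\frac{k_0}{2}D$ is pluriharmonic on $U$, hence of the form $f+\bar f$ for a holomorphic $f$; consequently $\det(g)^{-1}|\xi|^2=e^{\frac{k_0}{2}D}\,|e^{f}\xi|^2$. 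After the holomorphic change of trivialising section $\xi\mapsto e^{f}\xi$ (still denoted $\xi$) and the replacement of $\Phi$ by $D$ (the two differing by a pluriharmonic term), \eqref{potentialcanonical} reads $D+u\!\left(e^{\frac{k_0}{2}D}|\xi|^2\right)$ modulo a pluriharmonic summand. To see that this is exactly the diastasis I would invoke the characterisation following \eqref{powexdiastc}: a potential equals the diastasis precisely when its expansion at the centre has no purely holomorphic and no purely antiholomorphic monomial. Now $D$ has this property, and since $u(0)=0$ the summand $u\!\left(e^{\frac{k_0}{2}D}|\xi|^2\right)$ is a power series in $y:=e^{\frac{k_0}{2}D}|\xi|^2$ with no constant term; each power $y^{r}$, $r\ge 1$, carries the factor $\xi^{r}\bar\xi^{r}$, so every monomial of the summand contains both a holomorphic and an antiholomorphic fibre variable and can be neither purely holomorphic nor purely antiholomorphic. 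Being a \K\ potential for $g_C$ with this property, $D+u(e^{\frac{k_0}{2}D}|\xi|^2)$ must coincide with $D_{g_C}$, where $z$ are Bochner coordinates for $g$ on $M$ and $\xi$ the adapted fibre coordinate (rescaled, if needed, so that $(z,\xi)$ are Bochner for $g_C$).

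For the block structure I would exploit the circle symmetry: as $D_{g_C}$ depends on the fibre variable only through $|\xi|^2$, the function $e^{D_{g_C}}-1$ is invariant under $\xi\mapsto e^{i\theta}\xi$, so a monomial $z^{\mu}\xi^{p}\bar z^{\nu}\bar\xi^{q}$ can occur in \eqref{powexdiastcp} only when $p=q$. Sorting the multi-indices of \eqref{powexdiastcp} according to the common $\xi$-power $r$, the matrix $(b_{il})$ becomes block diagonal. To identify the $r$-th block I would expand
\[
e^{D_{g_C}}=e^{D}\,e^{\,u\left(e^{\frac{k_0}{2}D}|\xi|^2\right)}
\]
and collect powers of $t:=|\xi|^2$. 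Writing $q:=e^{\frac{k_0}{2}D}$ and $u(qt)=\sum_{s\ge1}\frac{u^{(s)}(0)}{s!}q^{s}t^{s}$, the exponential formula gives the coefficient of $t^{r}$ in $e^{u(qt)}$ as $\tfrac{1}{r!}B_{r}\!\left(u'(0)q,\dots,u^{(r)}(0)q^{r}\right)$, where $B_{r}$ is the complete Bell polynomial. Every monomial of $B_{r}$ has total weight $r$ (the $i$-th argument carrying weight $i$), so the substitution $u^{(i)}(0)q^{i}$ factors out a common $q^{r}$; this homogeneity yields coefficient $h_{r}(u)\,q^{r}$ with $h_{r}(u)=\tfrac{1}{r!}B_{r}\!\left(u'(0),\dots,u^{(r)}(0)\right)$. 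Multiplying by $e^{D}$ turns $q^{r}=e^{r\frac{k_0}{2}D}$ into $e^{(r\frac{k_0}{2}+1)D}$, so the coefficient of $\xi^{r}\bar\xi^{r}$ in $e^{D_{g_C}}-1$ is $h_{r}(u)\,e^{(r\frac{k_0}{2}+1)D}$; its $(z,\bar z)$-coefficient matrix is $h_{r}(u)$ times the matrix $(c^{r}_{jk})$ attached by \eqref{powexdiastcp} to the potential $(r\frac{k_0}{2}+1)D$, which is \eqref{eqfond} (for $r=0$ the subtracted $1$ combines with $e^{D}$ to give the matrix of $e^{D}-1$, and $h_0(u)=1$). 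Expanding $B_{1},\dots,B_{4}$ then yields \eqref{h1}--\eqref{h4}.

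The conceptual crux is the homogeneity of the complete Bell polynomial: it is exactly what forces the $r$-th block to be a scalar multiple of the matrix of the single potential $(r\frac{k_0}{2}+1)D$, rather than a combination of the matrices of several such potentials; without it \eqref{eqfond} would not factorise. The remaining points demand care rather than being genuine obstacles: the verification that the fibre summand contributes no purely (anti)holomorphic terms, the normalisation of $\xi$ making $(z,\xi)$ genuinely Bochner for $g_C$, and the bookkeeping of the constant $(0,0)$ entry of each block with $r\ge 1$.
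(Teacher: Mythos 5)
Your proof is correct and follows essentially the same route as the paper's: the \K--Einstein condition is used to replace $\det(g)^{-1}$ by $e^{\frac{k_0}{2}D}$, the absence of purely holomorphic and antiholomorphic terms identifies the potential with the diastasis, the dependence on $\xi$ only through $|\xi|^2$ gives the block structure, and the weight-homogeneity of the degree-$r$ coefficient of $e^{u(qt)}$ yields the factorisation \eqref{eqfond}. The only (harmless) variations are that you absorb the pluriharmonic term $f+\bar f$ by a holomorphic rescaling of the fibre coordinate, whereas the paper shows $f=0$ outright in Bochner coordinates (citing \cite{ALZampa}), and that you make explicit the formula $h_r(u)=\frac{1}{r!}Y_r\bigl(u'(0),\dots,u^{(r)}(0)\bigr)$ via complete Bell polynomials, a structure the paper leaves as ``straightforward computations'' here and only records in Section \ref{numerical}.
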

\begin{proof}
 Let  $z = (z_1, \dots, z_{n-1})$ be Bochner coordinates on a open set of $U\subset M$. 
The condition $\rho_g = k_0\omega_g$ reads:
$$
-i \partial \bar \partial \log \det \left( \frac{\partial^2 D}{\partial z_i \partial \bar z_j} \right) = \frac{k_0}{2}i \partial \bar \partial D,
$$
that is:
$$
\log \det \left( \frac{\partial^2 D}{\partial z_i \partial \bar z_j} \right) = -\frac{k_0}{2} D + f+ \bar f,
$$
where $f$ is a holomorphic function on $U$.
By using the condition that $z$ are Bochner coordinates and $D$ is the diastasis, it is not hard to see (cfr. also \cite{ALZampa}) that $f= 0$, and hence we can write:
$$
\det(g) = \det \left( \frac{\partial^2 D}{\partial z_i \partial \bar z_j} \right) = e^{-\frac{k_0}{2} D}.
$$
By \eqref{potentialcanonical} it follows that \eqref{potentialcanonicaldiast} is a \K\ potential for $g_C$ on $\pi^{-1}(U)$. Moreover, since the expansion  \eqref{powexdiastc}
of  $D$ does not contain holomorphic or antiholomorphic terms one easily  sees that  the same is true for the expansion of 
 $D_{g_C}$ with respect to the coordinates  $z_1, \dots, z_{n-1}, \xi$.
 
 In order to prove the second assertion observe first that 
 since $D_{g_C}$ depends on $\xi$, $\bar \xi$ only through $|\xi|^2$, all the derivatives taken a different number of times with respect to $\xi$ than $\bar \xi$, vanish once evaluated at $\xi=0$. Thus, the nonzero entries of $(b_{il})$ are of the form: 
$$
\frac{1}{m_j!m_k!}\frac{\partial^{|m_j|+|m_k|}}{\partial z^{m_j}\partial \bar z^{m_k}}\left[\frac{1}{r!^2}\frac{\partial^{2r}}{\partial \xi^r\partial \bar \xi^r}\left(e^{D+ u(e^{\frac{k_0}{2} D} |\xi|^2)} - 1\right)|_{\xi=0}\right]_{z=\bar z=0},
$$ 
and, for each $r=0,1,2,\dots$, we have a block submatrix $(b^r_{jk})$ of $(b_{il})$ defined by:
\begin{equation*}
\begin{split}
b^r_{jk}:=&\frac{1}{m_j!m_k!}\frac{\partial^{|m_j|+|m_k|}}{\partial z^{m_j}\partial \bar z^{m_k}}\left[\frac{1}{r!^2}\frac{\partial^{2r}}{\partial \xi^r\partial \bar \xi^r}\left(e^{D+ u(e^{\frac{k_0}{2} D} |\xi|^2)} - 1\right)|_{\xi=0}\right]_{z=\bar z=0}\\
=&h_r(u)\frac{1}{m_j!m_k!}\frac{\partial^{|m_j|+|m_k|}}{\partial z^{m_j}\partial \bar z^{m_k}}e^{(r\frac{k_0}{2}+1)D}|_{z=\bar z=0}\\
=&h_r(u)c^r_{jk},
\end{split}
\end{equation*}
where:
$$
c^r_{jk}:=\frac{1}{m_j!m_k!}\frac{\partial^{|m_j|+|m_k|}}{\partial z^{m_j}\partial \bar z^{m_k}}e^{(r\frac{k_0}{2}+1)D}|_{z=\bar z=0},
$$
and $h_r(u)$ is a constant that depends on the derivatives up to  $r$-th order on $u(x)$.
Thus \eqref{eqfond} is proved.
Finally, \eqref{h1}, \eqref{h2}, \eqref{h3} and \eqref{h4} follows by  straightforward computations.
 \end{proof}

\section{Proof of Theorem \ref{mainteor}}\label{proofmainteor}
Let us begin with the following lemma.
\begin{lemma}\label{neccond}
Let $(M,g)$ be a compact K\"ahler--Einstein manifold with Einstein constant $k_0$. If the  Ricci flat Calabi's metric
$g_C$ on $\Lambda^{n-1}M$  is projectively induced then the following conditions hold true.
\begin{itemize}
\item [(a)]
 $\frac{k_0}{2}$ is a positive integer;
 \item [(b)]
 $(r\frac{k_0}{2}+1)g$ is projectively induced, for any $r=0, 1\dots$;
 \item [(c)]
 each $h_r(u)$ given by \eqref{eqfond} is non negative, for any $r=0, 1\dots$. 
 \end{itemize}
 \end{lemma}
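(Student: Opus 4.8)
The plan is to combine Lemma~\ref{senzanome} with Calabi's criterion (Lemma~\ref{criterium}). If $g_C$ is projectively induced then in particular it is locally so, and hence the $\infty\times\infty$ matrix $(b_{il})$ of \eqref{powexdiastcp} for $D_{g_C}$ is positive semidefinite. By Lemma~\ref{senzanome} every monomial occurring in $e^{D_{g_C}}-1$ has the form $z^{m_j}\bar z^{m_k}\xi^r\bar\xi^r$, with equal powers of $\xi$ and $\bar\xi$ (because $D_{g_C}$ depends on $\xi$ only through $|\xi|^2$), so $(b_{il})$ is block diagonal in the fiber degree $r$, the $r$-th block being $(b^r_{jk})=h_r(u)(c^r_{jk})$ with $(c^r_{jk})$ the Calabi matrix of the metric $(r\frac{k_0}{2}+1)g$. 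Since a block diagonal matrix is positive semidefinite exactly when each of its blocks is, the single consequence I would extract from the hypothesis is: \emph{each $h_r(u)(c^r_{jk})$ is positive semidefinite}.

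Condition (c) then falls out of the diagonal. For $r\ge 1$ the $(0,0)$ entry of the $r$-th block is $b^r_{00}=h_r(u)\,c^r_{00}=h_r(u)$, because $c^r_{00}$ is the constant term of $e^{(r\frac{k_0}{2}+1)D}$ and $D(0)=0$, so $c^r_{00}=1$ (the subtraction of $1$ in \eqref{powexdiastcp} only affects the single term $r=j=k=0$). A positive semidefinite matrix has nonnegative diagonal entries, whence $h_r(u)\ge 0$; for $r=0$ one has $h_0(u)=1\ge 0$ directly. This is (c).

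For (b) I would pass to the global immersion. The $S^1$--action $\xi\mapsto e^{i\theta}\xi$ is by isometries of $g_C$, so by the rigidity of K\"ahler immersions the immersion $\Lambda^{n-1}M\to\C P^N$ can be chosen $S^1$--equivariant; decomposing into isotypical (fiber degree) components, its degree-$r$ components are global holomorphic functions $g_\alpha(z)\,\xi^r$, that is, global sections of $\mathcal L_0\otimes K_M^{-r}$, where $K_M:=\Lambda^{n-1}M$ and $\mathcal L_0$ is the restriction to the zero section $M$ of the pull-back of $\mathcal O(1)$. Comparing squared norms degree by degree gives $\sum_\alpha |g_\alpha(z)|^2 = h_r(u)\,e^{(r\frac{k_0}{2}+1)D}$, so whenever $h_r(u)>0$ the rescaled sections $g_\alpha/\sqrt{h_r(u)}$ realize $(r\frac{k_0}{2}+1)g$ as a projectively induced metric on $M$; at the level of Lemma~\ref{criterium} this is simply the observation that $h_r(u)>0$ together with $h_r(u)(c^r_{jk})$ positive semidefinite forces $(c^r_{jk})$ positive semidefinite, which is exactly Calabi's criterion for $(r\frac{k_0}{2}+1)g$. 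The values of $r$ with $h_r(u)=0$, if any, are immaterial, since the subsequent argument only uses the $r$ with $h_r(u)>0$. This is (b).

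Condition (a) is the point I expect to be hardest. Applying (b) at $r=0$ and $r=1$ makes $g$ and $(\frac{k_0}{2}+1)g$ projectively induced on the compact manifold $M$, so $[\omega_g]$ and $(\frac{k_0}{2}+1)[\omega_g]$ both lie in $H^2(M,\Z)$, and hence so does their difference $\frac{k_0}{2}[\omega_g]$. By the Einstein condition $\frac{k_0}{2}[\omega_g]=c_1(M)=c_1(K_M^{-1})$, so the ample bundle $\mathcal L_0$ with $c_1(\mathcal L_0)=[\omega_g]$ satisfies $c_1(K_M^{-1})=\frac{k_0}{2}\,c_1(\mathcal L_0)$. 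The delicate step is to promote this relation between \emph{integral} classes to the statement that $\frac{k_0}{2}$ is a positive integer: the cohomological identity by itself does not exclude a noninteger rational $\frac{k_0}{2}$ when $[\omega_g]$ is divisible in $H^2(M,\Z)$. Overcoming this is the core difficulty, and I expect it to require more than cohomology, namely the actual global geometry of Calabi's immersion and the way $\mathcal L_0$, the normal (canonical) bundle $K_M$ of the zero section, and the fiber-degree filtration of the $S^1$--equivariant immersion must fit together for a single holomorphic map into $\C P^N$ to exist.
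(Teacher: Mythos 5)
Your block-diagonal reduction and your proof of (c) are correct, and (c) is in fact handled more efficiently than in the paper: the paper deduces (c) at the very end, from (b) combined with \eqref{eqfond} and Calabi's criterion, whereas you read $h_r(u)\ge 0$ directly off the diagonal entry $b^r_{00}=h_r(u)\,c^r_{00}=h_r(u)$ of the $r$-th block, which needs neither (a) nor (b). Your argument for (b) when $h_r(u)>0$ is also exactly the paper's argument for $r=1$ (there $h_1(u)=u'(0)=c>0$, so $(c^1_{jk})$ is positive semidefinite and Lemma~\ref{criterium} applies); the $S^1$-equivariance detour adds nothing. What you are missing for (b) in full is the mechanism the paper actually uses: once (a) is known, $r\frac{k_0}{2}+1$ is a positive integer, and an integer multiple of a projectively induced metric is again projectively induced by composing with a suitably normalized Veronese embedding (\cite[Theorem 13]{calabi}); this covers every $r$, including those with $h_r(u)=0$. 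Your remark that such $r$ are ``immaterial'' is true for the proof of Theorem~\ref{mainteor}, which only uses (a) and (c), but it does not prove the lemma as stated.

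The genuine gap in your proposal is (a), which you leave unproven. Here, however, your objection to the ``obvious'' approach is precisely an objection to the paper's own proof, and it is well founded. The paper argues: $g$ and $(\frac{k_0}{2}+1)g$ projectively induced $\Rightarrow$ $[\omega]$ and $(\frac{k_0}{2}+1)[\omega]$ integral $\Rightarrow$ $\frac{k_0}{2}\in\Z^+$. The second implication is false in general, for exactly the divisibility reason you give. Concretely, take $(M,g)=(\C P^1, m\,g_{FS})$ with $m\ge 3$: then $k_0=\frac{4}{m}$, so $\frac{k_0}{2}=\frac{2}{m}\notin\Z$, yet $g=m\,g_{FS}$ and $(\frac{k_0}{2}+1)g=(m+2)\,g_{FS}$ are both projectively induced (via Veronese maps) and both Kähler forms are integral. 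Note that these are precisely the manifolds to which Theorem~\ref{mainteor} is applied to obtain the corollary on $m\,g_{EH}$, so the premises of the paper's inference are genuinely realized while its conclusion fails. This does not contradict statement (a) itself (whose hypothesis is that $g_C$, not merely $g$, is projectively induced), but it does mean the paper's proof of (a) is incomplete as written; and the gap matters, because without (a) one cannot exclude $k_0<\frac{2}{n-1}$, where $h_2(u)=\frac{c^2}{2}\left(1-\frac{(n-1)k_0}{2}\right)$ is positive and the paper's $h_2$/$h_4$ computation no longer yields a contradiction --- one would then need a negative $h_r(u)$ for arbitrarily large $r$, which is essentially the content of Theorem~\ref{teornum}, for which the paper says it has no direct proof.

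In summary: measured against the lemma as stated, your proposal is incomplete ((a) missing, (b) only partial), so it is not a proof. But the step you single out as the core difficulty is exactly the step where the paper's own argument breaks down, and your diagnosis of why (divisibility of $[\omega]$ in $H^2(M,\Z)$) is correct; repairing it seems to require either an extra hypothesis (primitivity of $[\omega]$) or a genuinely different argument, as you suggest.
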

\begin{proof}
The assumption that $g_C$ is projectively induced and Calabi's criterion (Lemma \ref{criterium}) imply that the block submatrices $(b_{jk}^r)=h_r(u)(c^r_{jk})$ given by \eqref{eqfond} are positive semidefinite, for any $r=0, 1\dots$. Since  $g_C{_{{|M}}}=g$ it follows that also $g$ is projectively induced, and so  (b) is valid   for $r=0$ . When $r=1$ (cfr. \eqref{h1}), $h_1(u)=u'(0)e^{u(0)}=c>0$ (where $c$ is the constant appearing in $(ii)$ before Theorem C) which proves that $(c^1_{jk})$ is positive semidefinite.  Again by Calabi's criterion 
one deduces that  $(\frac{k_0}{2}+1)g$ is projectively induced. 
If $\omega$ is the \K\ form associated to $g$ the fact that both  $g$ and
$(\frac{k_0}{2}+1)g$ are  projectively induced imply that  $\omega$ and $(\frac{k_0}{2}+1)\omega$ are  integral forms, forcing $\frac{k_0}{2}$ to be a positive integer, i.e. (a).
Hence $(r\frac{k_0}{2}+1)$ is a positive integer and (b) follows by composing the \K\ immersion inducing $g$ with a suitable normalization of the Veronese embbedding (cfr. \cite[Theorem 13]{calabi}). Finally, by combining (b), \eqref{eqfond} and Calabi's criterion we get (c).
\end{proof}

\begin{proof}[Proof of Theorem \ref{mainteor}]
We start noticing that if  the metric $\tilde g$ on $L$ is Ricci flat and projectively induced
then $\tilde g=g_C$ and $L=\Lambda^{n-1}M$.
Indeed, since $\tilde g _{|M}=g$  one has  that also  $g$ is  projectively induced and, by a result  of Hulin \cite{hu} $(M, g)$ has positive Einstein constant $k_0$.
Therefore, $M$ is simply-connected,  and the Ricci flatness of $\tilde g$, i.e $\ell=k_0$,
combined with \eqref{eqqq}  yields
$$c_1(L)=[-\frac{i}{2\pi}\partial\bar\partial\log h]=-\frac{k_0}{2}[\omega]=-[\frac{\rho}{2}]=c_1(\Lambda^{n-1}M),$$
 and so $L$ is holomorphically equivalent to the canonical bundle
$\Lambda^{n-1} M$.

Thus, assume by contradiction that the metric $g_C$ is projectively induced.
We will show that the sign of some $h_r(u)$ is negative and by (c) of  Lemma \ref{neccond} this gives the desired contradiction.
Setting $x=0$ in  equation $(ii)$ before Theorem C, one has $u'(0) = c$, while differentiating the same equation with respect to $x$ one gets:
\begin{equation}\label{firstderiv}
(n-1) k_0 (1 + k_0 x u')^{n-2} (u' + x u'')^2 + (1 + k_0 x u')^{n-1}(2 u'' + x u''') = 0
\end{equation}
which, evaluated in $x=0$, gives:
$$
u''(0) = - \frac{(n-1)k_0}{2}u'(0)^2.
$$
Combining this with $u'(0)=c$, one deduces:
\begin{equation}\label{condizNE}
u''(0) + u'^2(0) = \left( 1 - \frac{(n-1)k_0}{2} \right) c^2.
\end{equation}

 Recalling that $c>0$ and that, by (a) of Lemma \ref{neccond}, $\frac{k_0}{2}$ is a positive integer, we deduce  by \eqref{h2} that $h_2(u)$  is negative except for $k_0=2$ and $n=2$ (where it vanishes).
 For this values  we will show that  $h_4(u)<0$.
In order to do that, we differentiate \eqref{firstderiv} twice and  evaluating at $u'(0)=c$ and
$u''(0)=-c^2$ we get
$u'''(0)=4c^3,$
and 
$u^{(iv)}(0)=-30c^4.$
Plugging these into \eqref{h4} gives that $h_4(u)=-\frac{2}{3}c^4 <0$,
and we are done\footnote{The reason why we look at the coefficient $h_4(u)$
is because by \eqref{h3} one deduces $h_3(u)=\frac{c^3}{3}>0$.}.
\end{proof}

\begin{remark}\label{flagex}
It is an open problem to classify the compact  \K --Einstein manifolds admitting  \K\ immersions into complex projective spaces. 
The only known examples are indeed flag manifolds. The reader is referred to \cite[Ch.3]{diastbook} for other properties of projectively induced homogeneous metrics and, in particular,  for the 
proof that any integral \K\ form on a compact flag manifold is projectively induced.
Combining this  result with the fact that on any flag manifold one can find a \K--Einstein integral form with Einstein constant $k_0=2$ (see \cite{LMZ17}) we deduce the existence of a projectively induced \K--Einstein metric satisfying (a) in Lemma \ref{neccond}. Actually by taking the \K\ product of such manifolds
one can construct \K--Einstein metrics with arbitrary large even value of their Einstein constant.
This means that also if one restricts to the case of flag manifolds the proof of Theorem
\ref{mainteor} 
cannot be deduced by simply showing that condition (a) is not satisfied.
\end{remark}

\section{An application to a numerical problem}\label{numerical}
Recall that the partial (exponential) Bell polynomials $B_{r,j}(x):=B_{r,j}(x_1,\dots, x_{r-j+1})$ of degree $r$ and weight $j$ are defined by (see e.g. \cite[p. 133]{comtet}):
\begin{equation}\label{bjkdef}
B_{r,j}(x_1,\dots, x_{r-j+1})=\sum\frac{r!}{s_1!\cdots s_{r-j+1}!}\left(\frac{x_1}{1!}\right)^{s_1}\left(\frac{x_2}{2!}\right)^{s_2}\cdots \left(\frac{x_{r-j+1}}{(r-j+1)!}\right)^{s_{r-j+1}},
\end{equation}
where the sum is taken over the integers solutions of:
$$
\begin{cases}s_1+2s_2+\dots+js_{r-j+1}=r\\ s_1+\dots+s_{r-j+1}=j.\end{cases}
$$

The complete Bell polynomials are given by:
$$
Y_r(x_1,\dots, x_r)=\sum_{j=1}^rB_{r,j}(x),\quad Y_0:=0,
$$
and the role they play in our context is given by the following formula \cite[Eq. 3b, p.134]{comtet}:
\begin{equation}\label{exp}
\frac{d^r}{dx^r}\left(\exp\left(\sum_{k=1}^\infty a_k\frac{x^k}{k!}\right)\right)|_0=Y_r(a_1,\dots, a_r).
\end{equation}

We can prove the following inequality. 

\begin{theorem}\label{teornum}
Let $n\in \mathds Z^+$, $n\geq 2$. For any $q\in \mathds Q^+$, there exists $r$ sufficiently large such that the following inequality holds true:
\begin{equation}\label{risultato}
(-1)^r \sum_{j=1}^{r}\left(-q\right)^j B_{r,j}\!\!\left(1,\frac{n-1}2,\frac{(n-1)(2n-1)}{3},\dots, \frac{ 1}l\prod_{s=1}^{l-1}(ns-1),\dots\right)<0.
\end{equation}
\end{theorem}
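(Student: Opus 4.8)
The plan is to identify the alternating Bell sum in \eqref{risultato} with one of the coefficients $h_r(u)$ produced by Lemma \ref{senzanome}, and then to read off its sign.

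\emph{Step 1 (reduction to a single coefficient $h_r(u)$).} Let $F$ be the power series determined by $F^{(l)}(0)=\tfrac1l\prod_{s=1}^{l-1}(ns-1)$, so that the entries $1,\tfrac{n-1}2,\dots$ appearing in \eqref{risultato} are exactly $F'(0),F''(0),\dots$. By the Fa\`a di Bruno formula (with $B_{r,j}$ the partial Bell polynomials and $e^{-qt}$ as outer function, whose derivatives at $0$ are $(-q)^j$), the inner sum in \eqref{risultato} equals $\frac{d^r}{dx^r}e^{-qF(x)}|_0$; summing the Taylor series gives $F'(x)=\frac{1-(1-nx)^{1/n}}{x}$. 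On the other hand, solving Calabi's ODE $(ii)$ (put $Q=xu'$, so $(1+k_0Q)^{n-1}Q'=c$) yields the closed form $u'(x)=\frac{(1+\gamma x)^{1/n}-1}{k_0x}$ for the positive constant $\gamma=nk_0c$, whence $u(x)=-\tfrac1{k_0}F(-\gamma x/n)$ and $e^{u(x)}=e^{-qF(-\gamma x/n)}$ with $q=1/k_0$. Since $u(0)=0$, the numbers of Lemma \ref{senzanome} are $h_r(u)=\tfrac1{r!}Y_r\big(u'(0),\dots,u^{(r)}(0)\big)=[x^r]e^{u(x)}$ (this matches \eqref{h1}--\eqref{h4} and follows from \eqref{exp}), and comparing coefficients produces the exact identity
\[
(-1)^r\sum_{j=1}^r(-q)^jB_{r,j}\!\Big(1,\tfrac{n-1}2,\dots\Big)=\Big(\tfrac{n}{\gamma}\Big)^r r!\,h_r(u),\qquad q=\tfrac1{k_0}.
\]
As $(n/\gamma)^r r!>0$, the inequality \eqref{risultato} is equivalent to $h_r(u)<0$, where $u$ is Calabi's function for Einstein constant $k_0=1/q$.

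\emph{Step 2 (sign of $h_r(u)$ via the geometry, plus the small-$q$ range).} For $q<\tfrac{n-1}2$ this is elementary: by \eqref{h2} and \eqref{condizNE} one has $h_2(u)=\tfrac{c^2}2\big(1-\tfrac{(n-1)k_0}2\big)$, so the left-hand side of \eqref{risultato} at $r=2$ equals $q\big(q-\tfrac{n-1}2\big)$, which is negative for $0<q<\tfrac{n-1}2$. For the remaining $q$ the idea is that \emph{some} $h_r(u)$ must be negative whenever the Ricci-flat metric is not projectively induced: if $q=s/\beta$ with $s\in\Z^+$ and $\beta$ an even Einstein constant carried by a flag manifold of complex dimension $n-1$, take the base $(M,sg)$; by Theorem \ref{mainteor} the Ricci-flat Calabi metric is not projectively induced, and by Lemma \ref{senzanome} the matrix \eqref{powexdiastcp} is block diagonal with blocks $h_r(u)(c^r_{jk})$, where $(c^r_{jk})$ is the coefficient matrix of the integral form $(r\tfrac{\beta}2+s)\omega$ on a flag -- hence projectively induced, so positive semidefinite and nonzero. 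By Calabi's criterion (Lemma \ref{criterium}) positivity can then fail only through some $h_r(u)<0$, giving \eqref{risultato} for such $q$.

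\emph{Step 3 (the main obstacle).} The flag realisations of Step 2 only furnish a sparse set of rational $q$ at fixed $n$, so the crux is to establish $h_r(u)<0$ for the remaining $q$ and for $r$ large. Here I would pass to the coefficient asymptotics of $e^{u(x)}$: the function $u$ is holomorphic on $|x|<1/\gamma$ with a single dominant singularity at $x=-1/\gamma$, a branch point of type $(1+\gamma x)^{1+1/n}$ whose leading singular coefficient $-\tfrac1{k_0(1+1/n)}e^{u_{\mathrm{reg}}(-1/\gamma)}$ is strictly negative. A standard transfer (singularity-analysis) estimate then gives $h_r(u)=[x^r]e^{u(x)}\sim -C\,(-1)^r r^{-2-1/n}$ with $C>0$, so $h_r(u)<0$ for every sufficiently large even $r$; since this holds for each fixed $q>0$, it covers all $q\in\Q^+$ (indeed all real $q>0$, independently of the geometry). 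I expect the control of this sign -- proving that the branch term dominates with the asserted negative sign, rather than being cancelled by the regular part -- to be the essential difficulty, and the likely reason a purely combinatorial manipulation of the Bell sum is hard to come by.
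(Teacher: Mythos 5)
Your Steps 1 and 2 are correct and, taken together, they \emph{are} the paper's proof, modulo presentation: the paper computes the Taylor coefficients $a_j$ of Calabi's function (formula \eqref{ajexpl}), writes the $\xi$-blocks of the matrix of $e^{mD_{g_C}}-1$ as $h_r(u,m)(c^{r,m}_{jk})$ via \eqref{exp}, invokes Theorem \ref{mainteor}, Remark \ref{remarmult} and Calabi's criterion to force some $h_r(u,m)<0$ whenever the base $(M,g)$ is projectively induced with $\frac{k_0}{2}, m\in\Z^+$, and then uses the homogeneity $B_{r,j}(tpx_1,tp^2x_2,\dots)=t^jp^rB_{r,j}(x_1,x_2,\dots)$ to turn this into \eqref{risultato} with $q=m/k_0$. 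Your closed-form integration of the ODE and the Fa\`a di Bruno identity $(-1)^r\sum_{j}(-q)^jB_{r,j}=(n/\gamma)^r r!\,h_r(u)$ are exactly this conversion run in the other direction (and your normalization $h_r(u)=[x^r]e^{u}$ is the one consistent with \eqref{h1}--\eqref{h4}); your elementary $r=2$ case for $q<\frac{n-1}{2}$ is a small bonus with no counterpart in the paper.

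The divergence is at Step 3, and here your worry is not a defect of your write-up but of the paper's: the paper closes the argument by asserting, via Remark \ref{flagex}, that projectively induced K\"ahler--Einstein manifolds exist with \emph{any} positive integer value of $k_0/2$. But the base is forced to have complex dimension $n-1$ (the entries of the Bell polynomials in \eqref{risultato} are determined by $n=\dim_{\C}M+1$), and for a projectively induced KE metric $[\omega]$ is integral with $c_1(M)=\frac{k_0}{2}[\omega]$ (this identity appears in the paper's own proof of Theorem \ref{mainteor}), so $\frac{k_0}{2}$ is at most the Fano index of $M$, hence at most $n$; moreover the K\"ahler products invoked in Remark \ref{flagex} cannot raise the Einstein constant, since a product of KE metrics is Einstein only when the constants coincide, and then it has that same constant. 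Thus the published proof only reaches $q=m/k_0$ with $k_0$ even and $k_0\le 2n$ (for $n=2$ it misses, e.g., $q=\frac{1}{3}$), which is precisely the ``sparse set'' you identified. Your proposed singularity-analysis completion is sound, and its key claims check out: $u$ is analytic on $\C\setminus(-\infty,-1/\gamma]$, it differs from a function analytic at $x=-1/\gamma$ by $-\frac{n}{k_0(n+1)}(1+\gamma x)^{1+1/n}+O\bigl((1+\gamma x)^{2+1/n}\bigr)$, and transfer gives $h_r(u)\sim -C(-1)^r\gamma^r r^{-2-1/n}$ with $C>0$; the cancellation you fear cannot occur, because integer powers of $(1+\gamma x)$ contribute nothing to $[x^r]$ for large $r$ and the remainder is $O(\gamma^r r^{-3})$, of smaller order. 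Carried out in full, this yields \eqref{risultato} for every real $q>0$ and all large even $r$, simultaneously completing your proposal and repairing the gap in the paper; as submitted, however, Step 3 is only a sketch, so your proposal is not yet a complete proof --- its missing step being exactly the one the paper glosses over.
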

\begin{proof}
Let  $u(x)$ be the function given by \eqref{funcal}.
Then a straightforward computation by differentiating
$(ii)$ before Theorem C, shows that its series expansion is given by:
$$
u(x)=\sum_{k=1}^\infty \frac{a_k}{k!}x^k,\quad a_1=c,
$$
where:
\begin{equation}\label{ajexpl}
a_j=\frac{(-1)^{j+1}}jc^jk_0^{j-1}\prod_{s=1}^{j-1}(ns-1), \quad j=2,3,4,\dots.
\end{equation}
Let $m>0$. By \eqref{exp} it is easy to see that:
\begin{equation}\label{rder}
\begin{split}
\frac{\partial^{2r}}{\partial \xi^r \partial \bar \xi^r} [e^{mD+ mu(e^{\frac{k_0}{2}D} |\xi|^2)} - 1]|_{\xi = 0}=&r!e^{(r\frac{k_0}{2}+m) D}\sum_{j=1}^rm^jB_{r,j}(a_1,a_2,\dots),
\end{split}
\end{equation}
and in particular, the block matrices described in \eqref{eqfond} are now given by:
$$
(b_{jk}^{r,m})=h_r(u,m)(c^{r,m}_{jk}),
$$
where:
$$
h_r(u,m):=r!\sum_{j=1}^rm^jB_{r,j}(a_1,a_2,\dots),\quad c^{r,m}_{jk}:=\frac{1}{m_j!m_k!}\frac{\partial^{|m_j|+|m_k|}}{\partial z^{m_j}\partial \bar z^{m_k}}e^{(r\frac{k_0}{2}+m)D}|_{z=\bar z=0},
$$
(in this notations, the constants $h_r(u)$ appearing in \eqref{eqfond} are given by $h_r(u)=h_r(u,1)=r!Y_r(a_1,\dots,a_r)$). By Theorem \ref{mainteor} and Remark \ref{remarmult} the metric $mg_C$ on 
$\Lambda^{n-1}M$ is not projectively induced for any $m>0$. By Calabi's criterion this implies that each block $(b_{jk}^{r,m})$ is not semipositive definite. If we assume $(M,g)$ to be projectively induced and $r\frac{k_0}{2}+m$ to be a positive integer for any positive integer $r$, i.e. $\frac{k_0}{2}$, $m\in \mathds Z^+$, then Calabi's criterion applied to $(M,(r\frac{k_0}{2}+m)g)$ implies that each block $(c^{r,m}_{jk})$ is semipositive definite. Thus, at least one of the constants $h_r(u,m)$ is forced to be negative, i.e. for any $m\in \mathds Z^+$ there exists a sufficiently large $r$ such that: 
\begin{equation*}\label{Brj}
\sum_{j=1}^rm^jB_{r,j}(a_1,a_2,\dots)<0.
\end{equation*}
Since:
$$
B_{r,j}(tpx_1,tp^2x_2,\dots,tp^{r-j+1}x_{r-j+1})=t^jp^rB_{r,j}(x_1,\dots, x_{r-j+1}),
$$
by \eqref{ajexpl} we can write:
\begin{equation*}\label{bell}
\begin{split}
\sum_{j=1}^rm^j&B_{r,j}(a_1,a_2,\dots)=(-ck_0)^r\sum_{j=1}^r\left(-\frac{m}{k_0}\right)^j\cdot\\
&\cdot B_{r,j}\!\!\left(1,\frac{n-1}2,\frac{(n-1)(2n-1)}{3},\dots, \frac{ 1}l\prod_{s=1}^{l-1}(ns-1),\dots\right).
\end{split}
\end{equation*}
At this point, \eqref{risultato} follows by observing that one can construct examples of projectively induced K\"ahler--Einstein manifolds with any positive integer value of $k_0/2$ (see Remark \ref{flagex}).
\end{proof}

\end{document}